	\newcommand{\id}{\mathbf{1}}
  \def\cup{\cupprod}
  \def\bigcup{\bigcupprod}
  \def\bigcupdisjoint{\mathop{\kern10pt\raisebox{4pt}{$\cdot$}\kern-12pt\bigcup}\limits}
	 \newcommand{\id}{\ensuremath{\mathds{1}}}
	\let\wtilde\widetilde
\let\epsilon\varepsilon
\renewcommand{\Im}{\ensuremath{\operatorname{Im}}}
\DeclareMathOperator{\Tr}{Tr}
\newcommand{\abs}[1]{\ensuremath{\left\vert #1 \right\vert}}
\DeclareMathOperator{\supp}{supp}
\DeclareMathOperator{\dist}{dist}
\numberwithin{equation}{section}
\newtheoremstyle{ttheorem}%
       {1.8ex\@plus1ex}                
       {2.1ex\@plus1ex\@minus.5ex}      
       {\itshape}           
       {0pt}                   
       {\bfseries}          
       {.}                  
       {.5em}               
       {}                
\newtheoremstyle{ddefinition}%
       {1.8ex\@plus1ex}                
       {2.1ex\@plus1ex\@minus.5ex}      
       {}           
       {0pt}                   
       {\bfseries}           
       {.}                  
       {.5em}               
       {}                
\newtheoremstyle{rremark}%
       {1.8ex\@plus1ex}                
       {2.1ex\@plus1ex\@minus.5ex}      
       {\normalfont}        
       {0pt}                   
       {\bfseries}           
       {.}                  
       {.5em}               
       {}                   
\theoremstyle{ttheorem}
\newtheorem{theorem}{Theorem}[section]
\newtheorem{lemma}[theorem]{Lemma}
\newtheorem{proposition}[theorem]{Proposition}
\newtheorem{corollary}[theorem]{Corollary}
\theoremstyle{ddefinition}
\theoremstyle{rremark}
\newtheorem{remark}[theorem]{Remark}
\newtheorem{myremarks}[theorem]{Remarks}
\newtheorem{myexamples}[theorem]{Examples}
\newenvironment{remarks}{\begin{myremarks}\begin{nummer}}%
    {\end{nummer}\end{myremarks}}
    {\end{nummer}\end{myexamples}}
\newcounter{numcount}
\newcommand{\labelnummer}{(\roman{numcount})}%
\providecommand{\showkeyslabelformat}[1]{\relax}        
\let\mysaveformat\showkeyslabelformat                   %
\def\myformat#1{\raisebox{-1.5ex}{\mysaveformat{#1}}}   %
\newenvironment{nummer}%
  {\let\curlabelspeicher\@currentlabel%
    \begin{list}{\textup{\labelnummer}}%
      {\usecounter{numcount}\leftmargin0pt%
        \topsep0.5ex\partopsep2ex\parsep0pt\itemsep0ex\@plus1\p@%
        \labelwidth2.5em\itemindent3.5em\labelsep1em%
      }%
    \let\saveitem\item%
    \def\item{\saveitem%
      \def\@currentlabel{\curlabelspeicher\kern.1em\labelnummer}}%
    \let\savelabel\label%
    \def\label##1{{\ifnum\thenumcount=1\let\showkeyslabelformat\myformat\fi\savelabel{##1}}%
										{\def\@currentlabel{\labelnummer}%
									 	\let\showkeyslabelformat\@gobble
									 	\savelabel{##1item}%
										}%
	   							}%
  }{\end{list}}%
\let\curlabelspeicher\@currentlabel%
    \let\saveitem\item%
    \def\item{\saveitem%
      \def\@currentlabel{\curlabelspeicher\kern.1em\labelnummer}}%
    \let\savelabel\label%
    \def\label##1{{\ifnum\thenumcount=1\let\showkeyslabelformat\myformat\fi\savelabel{##1}}%
										{\def\@currentlabel{\labelnummer}%
									 	\let\showkeyslabelformat\@gobble
									 	\savelabel{##1item}%
										}%
    							}%
\let\OldItem\item
\newcommand{\MyItem}[2][]{}%
\def\section{\@startsection{section}{1}%
  \z@{1.3\linespacing\@plus\linespacing}{.5\linespacing}%
  {\normalfont\bfseries\centering}}
\def\subsection{\@startsection{subsection}{2}%
  \z@{.8\linespacing\@plus.5\linespacing}{-1em}%
  {\normalfont\bfseries}}
\def\nlsubsection{\@startsection{subsection}{2}%
  \z@{.8\linespacing\@plus.5\linespacing}{.1ex}%
  {\normalfont\bfseries}}
\let\@afterindenttrue\@afterindentfalse%
\renewenvironment{proof}[1][\proofname]{\par \normalfont
  \topsep6\p@\@plus6\p@ \trivlist 
  \item[\hskip\labelsep\scshape
    #1\@addpunct{.}]\ignorespaces
}{%
  \qed\endtrivlist
}
\def\ps@firstpage{\ps@plain
  \def\@oddfoot{\normalfont\scriptsize \hfil\thepage\hfil
     \global\topskip\normaltopskip}%
  \let\@evenfoot\@oddfoot
  \def\@oddhead{
    \begin{minipage}{\textwidth}
      \normalfont\scriptsize
      \emph{\insertfirsthead}
    \end{minipage}}
  \let\@evenhead\@oddhead 
}
\def\insertfirsthead{}
\def\@cite#1#2{{%
 \m@th\upshape\mdseries[{#1}{\if@tempswa, #2\fi}]}}
\newcommand{\C}{\mathbb{C}}
\newcommand{\N}{\mathbb{N}}
\newcommand{\PP}{\mathbb{P}}
\newcommand{\R}{\mathbb{R}}
\newcommand{\Z}{\mathbb{Z}}
\renewcommand{\leq}{\leqslant}
\renewcommand{\geq}{\geqslant}
\providecommand{\wtilde}[1]{\widetilde{#1}}
\providecommand{\abs}[1]{\lvert#1\rvert}
\newcommand{\upd}{\mathrm{d}}
\renewcommand{\d}{\upd}   
\newcommand{\hairspace}{\kern .04167em}
\newcommand{\Zd}{\mathbb{Z}^d}
\DeclareMathOperator{\TextIm}{Im}
\renewcommand{\Im}{\TextIm}
\newcommand{\beq}{\begin{equation}}
\newcommand{\eeq}{\end{equation}}
\newcommand{\be}{\begin}
\newcommand{\E}{\mathbb E}
\def\clap#1{\hbox to 0pt{\hss#1\hss}}
\begin{document}

\title[Lifshitz tails for the fractional Anderson model]{Lifshitz tails for the fractional Anderson model}

\author[M.\ Gebert]{Martin Gebert}
\address[M.\ Gebert]{ Department of Mathematics, University of California, Davis, Davis, CA 95616, USA}
\email{mgebert@math.ucdavis.edu}

\author[C.\ Rojas-Molina]{Constanza Rojas-Molina}
\address[C.\ Rojas-Molina]{Laboratoire AGM, Dpt. math\'ematiques, CY Cergy Paris Universit\'e,
2 av. Adolphe Chauvin, 95302 Cergy-Pontoise, France}
\email{crojasmo@u-cergy.fr}

\begin{abstract}
We consider the $d$-dimensional fractional Anderson model $(-\Delta)^\alpha+ V_\omega$ on $\ell^2(\Z^d)$ where $0<\alpha\leq 1$. Here $-\Delta$ is the negative discrete Laplacian and $V_\omega$ is the random Anderson potential consisting of iid random variables. We prove that the model exhibits Lifshitz tails at the lower edge of the spectrum with exponent $ d/ (2\alpha)$. To do so, we show among other things that the non-diagonal matrix elements of the negative discrete fractional Laplacian are negative and satisfy the two-sided bound
$$
\frac{c_{\alpha,d}}{|n-m|^{d+2\alpha}} \leq -(-\Delta)^\alpha(n,m)\leq \frac{C_{\alpha,d}}{|n-m|^{d+2\alpha}}
$$
for positive constants $c_{\alpha,d}$, $C_{\alpha,d}$ and all $n\neq m\in\Z^d$.
\end{abstract}

\maketitle

\section{Introduction}
Fractional operators are non-local operators that arise in the study of systems with long-range interactions in connection with anomalous transport and in some cases with L\'evy processes, see e.g. \cite{MR3074586,Riascos_2018}. For this reason, they have been the subject of increasing interest in recent years, where they have been studied in both the discrete and continuous setting. The latter case has been well studied in the literature, see e.g. \cite{MR1054115, MR3613319,MR3916700} and references therein, while the discrete setting and the arising fractional dynamics has attracted greater interest recently, see e.g. \cite{MR3787555,Padgett_Liaw_etal19}.

In this note, we study the Integrated Density of States (IDS) of the discrete $\alpha$-fractional  Laplacian perturbed by a random Anderson potential, $ H_\alpha=(-\Delta)^\alpha+ V_\omega$ acting on $\ell^2(\Z^d)$ where $0<\alpha\leq 1$ and $d\in\N$ is the space dimension. We show, in particular, that the IDS of this model exhibits exponential decay near the lower spectral band edge, a phenomenon known as Lifshitz tails, see \cite{MR1223779,MR2509110,MR3364516} and references therein. It turns out that in our setting the Lifshitz exponent is given by $d/(2\alpha)$. To our understanding, this behaviour has only been proved so far in some continuous cases
  in the work of \^{O}kura \cite{MR736015}, and of Kaleta and Pietruska-Paluba \cite{MR1109472,kaleta2019lifschitz,kaleta2019lifschitzb}. The aforementioned studies are based on  probabilistic tools revolving around $\alpha$-stable L\'evy processes and corresponding Feynman-Kac formulas. Contrary to that, our proof in the discrete case is functional analytic in nature. It relies on Dirichlet-Neumann bracketing and operator monotonicity of the function $x\mapsto x^\alpha$ for $x\geq 0$ and $0<\alpha\leq 1$. 

One of the main features of the continuous fractional Laplacian is the slow (polynomial) off-diagonal decay of its kernel components, which is expected to be true also in the discrete setting. This was for example sketched in \cite{MR3600570} for arbitrary dimensions using a Tauberian argument, and for one-dimensional models it was shown in e.g. \cite{MR3787555}. This underlines the interpretation of the discrete operator $H_\alpha$ as an operator with long-range slowly decaying off-diagonal matrix elements perturbed by an on-site random potential.
In dimension $d=1$, similar models have been studied in \cite{MR1668075,MR3973892}, where the slow decay of the off-diagonal matrix elements is shown to have several consequences on the spectral and dynamical properties of the model.
In this note, we also provide a rigorous proof of the polynomial off-diagonal decay of the  matrix elements of the discrete fractional Laplacian $(-\Delta)^\alpha$ in arbitrary dimensions $d\geq 1$ with power $d+2\alpha$. This shows that the kernel of the discrete fractional Laplacian has the same off-diagonal decay as its continuous analogue see Theorem \ref{Lemma}. 

The paper is organised as follows: in the next section we state the model and results.  In Section \ref{s:fin_vol_approxIDS} we show that the IDS can be obtained as the limit of spectral projections of a finite-volume version of the Hamiltonian $H_{\alpha}$. In Section \ref{s:thm_LT} we show the fractional Lifshitz tails result. Finally, in Section \ref{s:off_diag_decay} we show the aforementioned off-diagonal decay of matrix elements of the fractional Laplacian $(-\Delta)^\alpha$.

\smallskip

\emph{Keywords: fractional Laplacian, Anderson model, random Schr\"odinger operators, Lifshitz tails, integrated density of states}

\section{Model and results} \label{s:mod_res}

In the following, $(\delta_n)_{n\in\Z^d}$ denotes the canonical orthonormal basis of $\ell^{2}(\Z^d)$, and $|\delta_n\>\<\delta_n|$ the orthogonal projection onto the subspace spanned by the vector $\delta_n$,  $n\in\Z^d$, where $d$ is the space dimension. For an operator $A$ acting on $\ell^2(\Z^d)$ and $n,m\in \Z^d$ we denote by 
\beq
A(n,m):=\<\delta_n,A\,\delta_m\>
\eeq
 the matrix elements of  $A$. Furthermore, for $p\in [1,\infty]$ we denote the $\ell^p$-norm of a vector  $x\in\Z^d$ by $|x|_p=\big(\sum_{i=1}^{d}\abs{x_i}^p\big)^{\frac 1 p}$. For $p=2$, the Euclidean norm on $\Z^d$, we use  the short hand notation $|\cdot|$. Given $L\in\N$, we denote by $\Lambda_L:=[-L,L]^d$ the box in $\Z^d$.

Let $0<\alpha\leq 1$, $\lambda>0$ and $d\in\N$. We consider the discrete fractional Anderson model of the form
\begin{equation}
\label{eq:TheOperator}
 H_{\alpha}:= (-\Delta)^\alpha +\lambda V_\omega
\end{equation}
acting on the Hilbert space $\ell^{2}(\Z^d)$, where $(-\Delta)^\alpha$ and $ V_{\omega}$ are subject to the following:
\be{MyDescription}
\item[(A)]{The operator $(-\Delta)^\alpha$ is the discrete fractional Laplacian defined by the functional calculus, where $-\Delta$ denotes the
 the discrete Laplacian on $\ell^2(\Z^d)$ given by $(-\Delta\varphi)(n):= \sum_{|n-m|_1=1} \big(\varphi(n)-\varphi(m)\big)$ for $\varphi\in\ell^2(\Z^d)$ and $n\in\Z^d$.
  \label{assA}}
\item[(B)]{ The random potential is given by $V_\omega := \sum_{n\in\Z^d} \omega_n \big|\delta_n\big\>\big\<\delta_n\big|$ with $\omega:=(\omega_n)_{n\in\Zd} \in \R^{\Z^{d}}$ being
	identically and independently distributed according to the Borel probability measure $\PP := \bigotimes_{\Z^{d}}P_{0}$
	on $\R^{\Z^{d}}$. The single-site probability measure $P_0$ is non-trivial  and we assume $0$ is the infimum of $\supp P_0$, the support of $P_0$ . We denote the corresponding expectation by $\E[\cdot]$.
 	\label{B}  }
\end{MyDescription}
As a consequence of the translation invariance of the unperturbed operator $(-\Delta)^\alpha$ and $(B)$, the operator $H_{\alpha}$ is ergodic in the usual sense \cite{MR1223779,MR2509110}. Hence, standard arguments imply that the spectrum $\text{spec}(H_\alpha)$ of $H_{\alpha}$ is deterministic and we have $\text{spec}(H_\alpha)=[0,(4d)^\alpha]+ \supp P_0$ for almost all $\omega\in\Omega$, see \cite[Thm. 3.9]{MR2509110}. Since we assume $0=\inf \supp P_0$, we have that $\inf \text{spec}(H_\alpha) =0$ almost surely.

In the following, we denote by $1_{(-\infty,E]}(H_\alpha)$  the spectral projection of $H_{\alpha}$ associated to the interval $(-\infty,E]$, and for $L\in\N$, we denote by $1_L$ the projection onto the box $\Lambda_L$. Then ergodicity of $H_\alpha$ also implies that, for almost all $\omega\in\Omega$, the limit
\beq \label{eq:def_ids}
\lim_{L\to\infty}\frac 1 {|\Lambda_L|}\Tr\big[ 1_{(-\infty,E]}(H_\alpha) 1_L\big] = \mathbb E\big[ \big\<\delta_0, 1_{(-\infty,E]}(H_\alpha)\delta_0\big\>\big] =:N_\alpha(E)
\eeq
exists for all $E\in\R$, where $|\Lambda_L|= (2L+1)^d$ is the volume of the box.
We call $N_\alpha(E)$ the integrated density of states of $H_\alpha$ (IDS).

For the standard Anderson model, i.e. $\alpha=1$,  an equivalent way of defining the IDS is by restricting the operator to finite volume and considering the limit of the so-called normalised eigenvalue counting function. This can be done for our model as well and is the subject of the next proposition. Given $L\in\N$, we denote by $H_{\alpha,L}$ the restriction of $H_\alpha$ to $\ell^{2}(\Lambda_L)$ given by
$H_{\alpha,L}:= 1_L H_\alpha 1_L$, where $1_L$ is the orthogonal projection onto $\Lambda_L$
as above.
\begin{proposition}\label{Proposition}
Let $0<\alpha\leq 1$ and let $H_{\alpha,L}$ be the finite-volume restriction of $H_\alpha$ to $\ell^{2}(\Lambda_L)$. Then, almost surely, the limit
\beq
\lim_{L\to\infty} \frac 1 {|\Lambda_L|} \Tr \big[1_{(-\infty,E]}(H_{\alpha.L})\big]
\eeq
exists for all $E\in\R$ and equals the IDS $N_\alpha$ at energy $E$, defined in \eqref{eq:def_ids}.
\end{proposition}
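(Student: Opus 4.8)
The plan is to show that the two quantities — the averaged spectral projection $N_\alpha(E)$ defined in \eqref{eq:def_ids} via the infinite-volume operator $H_\alpha$, and the normalised eigenvalue counting limit for $H_{\alpha,L}$ — coincide, by comparing both to the trace $\frac{1}{|\Lambda_L|}\Tr[1_{(-\infty,E]}(H_\alpha)1_L]$ whose convergence to $N_\alpha(E)$ is already known by ergodicity. So it suffices to prove
\beq
\lim_{L\to\infty}\frac 1 {|\Lambda_L|}\Big(\Tr\big[1_{(-\infty,E]}(H_{\alpha,L})\big]-\Tr\big[1_L 1_{(-\infty,E]}(H_\alpha)1_L\big]\Big)=0
\eeq
for almost every $\omega$ and every continuity point $E$ of $N_\alpha$; since $N_\alpha$ is nondecreasing it has at most countably many discontinuities, and handling those by a standard sandwiching argument (approximating from continuity points above and below) extends the statement to all $E$. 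The classical route for $\alpha=1$ is Dirichlet–Neumann bracketing on the kinetic term; here the difficulty, and the reason this needs a separate proposition, is that $(-\Delta)^\alpha$ is \emph{nonlocal}, so $H_{\alpha,L}=1_L H_\alpha 1_L$ is not a local perturbation of $H_\alpha$ near $\partial\Lambda_L$ and the usual geometric-resolvent/bracketing decoupling is not directly available.

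First I would reduce the spectral-projection difference to a difference of smooth functions of the operators. Fix a continuity point $E$ and pick $f\in C(\R)$ with $f=1$ on $(-\infty,E]$ and $f=0$ on $[E+\delta,\infty)$; using that $N_\alpha$ is continuous at $E$ and at $E+\delta$ for a.e.\ small $\delta$, one controls the error between $1_{(-\infty,E]}$ and $f$ in the normalised-trace sense by $N_\alpha(E+\delta)-N_\alpha(E)$, which tends to $0$ as $\delta\to 0$. The same holds at finite volume provided the finite-volume counting measures converge weakly, which is exactly what we are proving — so this step should be organised as: prove convergence first for $f\in C_c(\R)$ (equivalently for polynomials in a bounded functional calculus, or via Stone–Weierstrass on the compact spectrum), then deduce the statement for indicators at continuity points. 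The point of passing to continuous $f$ is that $f(H_{\alpha,L})$ and $1_L f(H_\alpha) 1_L$ can be compared through an integral representation — e.g.\ the Helffer–Sjöstrand formula, or simply approximating $f$ by polynomials and then comparing powers $H_{\alpha,L}^k$ with $1_L H_\alpha^k 1_L$.

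The core estimate, and the main obstacle, is therefore to bound
\beq
\frac 1 {|\Lambda_L|}\Big|\Tr\big[f(H_{\alpha,L})\big]-\Tr\big[1_L f(H_\alpha)1_L\big]\Big|\to 0.
\eeq
Here I would exploit the off-diagonal decay proved in Theorem \ref{Lemma}: the matrix elements $(-\Delta)^\alpha(n,m)$ decay like $|n-m|^{-(d+2\alpha)}$, which is summable in $m$, so $H_\alpha$ has uniformly bounded row sums and, more importantly, the "surface" term arising from truncation is controlled. Concretely, write $H_{\alpha,L}^k-1_L H_\alpha^k 1_L$ as a telescoping sum of terms each containing at least one commutator-type factor $(1-1_L)$ sandwiched between copies of $H_\alpha$; each such term, when traced against $1_L$, is supported on pairs $(n,m)$ with $n\in\Lambda_L$, $m\notin\Lambda_L$ (or vice versa), and its trace norm is bounded by $\sum_{n\in\Lambda_L}\sum_{m\notin\Lambda_L}|H_\alpha^{j}(n,p)|\cdots$, which by the polynomial decay is $\Oh(L^{d-1}\log L)$ or better — in any case $\oh(|\Lambda_L|)=\oh(L^d)$ because the kernel decay exponent $d+2\alpha$ strictly exceeds $d$, so the boundary layer contributes a vanishing fraction. (The potential term is diagonal, hence $1_L V_\omega 1_L$ equals the truncation exactly and contributes nothing to the difference; and the $\lambda V_\omega$ part is handled by noting $\sup_n|\omega_n|$ grows slower than any power of $L$ a.s., or by first truncating the potential.) Assembling: continuity of $f$ gives a uniform polynomial approximation, the boundary estimate kills the difference for each monomial, and the continuity points of $N_\alpha$ upgrade from $C_c$-test functions to the indicator $1_{(-\infty,E]}$, yielding the claimed equality of limits.
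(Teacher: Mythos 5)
Your overall strategy is the one the paper uses: compare the truncated operator $H_{\alpha,L}=1_LH_\alpha 1_L$ with the projected infinite-volume operator inside a functional calculus, and show the normalised trace of the difference vanishes because the off-diagonal decay of Theorem \ref{Lemma} makes the boundary double sum $\sum_{k\in\Lambda_L}\sum_{m\notin\Lambda_L}|k-m|^{-(d+2\alpha)}$ of order $\oh(L^d)$ (the paper gets $\Oh(L^{d-\min\{2\alpha,1/2\}})$; note your intermediate claim ``$\Oh(L^{d-1}\log L)$ or better'' is off for $2\alpha<1$, where the correct order is $L^{d-2\alpha}$, but your conclusion $\oh(L^d)$ is what matters). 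The difference is the choice of test functions: the paper works with resolvents, i.e.\ it reduces the Proposition, via \cite[Sec.~5.4]{MR2509110}, to showing $\frac1{|\Lambda_L|}\Tr\big[(H_{\alpha,L}-z)^{-1}-1_L(H_\alpha-z)^{-1}1_L\big]\to0$ for $z\notin\R$, and then a single application of the resolvent identity isolates exactly the kernel of $(-\Delta)^\alpha_L-(-\Delta)^\alpha$ (the diagonal potential cancels, as you also observe), with the resolvents contributing only a factor $|\Im z|^{-2}$. This buys two things over your polynomial/telescoping variant: the Stieltjes-transform criterion of Kirsch packages the weak-convergence and continuity-point bookkeeping you sketch by hand, and, more importantly, it is insensitive to unboundedness of the potential.

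That last point is the one genuine soft spot in your write-up: assumption (B) does not require $\supp P_0$ to be bounded, so $H_\alpha$ need not be a bounded operator, uniform polynomial approximation of $f$ on the spectrum is unavailable, and kernel-decay estimates for powers $H_\alpha^k$ need a bound on the potential. Your proposed fix that $\max_{n\in\Lambda_L}\omega_n$ grows slower than any power of $L$ almost surely is false without tail or moment assumptions on $P_0$ (heavy-tailed single-site measures are allowed), and truncating the potential would require an additional rank-perturbation argument to control its effect on both counting functions. The Helffer--Sj\"ostrand route you mention as an alternative does repair this, but it then amounts to the resolvent comparison that the paper carries out directly; if you go that way, the argument is essentially the paper's.
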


In order to prove Proposition \ref{Proposition}, we need the following bound on the off-diagonal decay of the matrix elements of $(-\Delta)^\alpha$ in arbitrary dimension $d\in \N$:

\begin{theorem}\label{Lemma}
Let $0<\alpha<1$. Then
\begin{enumerate}
\item[(i)]
For all $n,m\in\Z^d$ with $n\neq m$
\beq\label{eq:matele}
(-\Delta)^\alpha(n,m)<0
\eeq
and $(-\Delta)^\alpha(n,n)>0$.
\item[(ii)]
The limit 
\beq
\lim_{|n-m|\to\infty} |n-m|^{d+2\alpha} \big(-(-\Delta)^\alpha(n,m)\big)
 = 
K_{d,\alpha}>0
\eeq
exists with limit
\beq
K_{d,\alpha}= \frac{4^\alpha \Gamma(d/2+\alpha)}{\pi^{d/2} |\Gamma(-\alpha)|}>0. 
\eeq
\item[(iii)]
There exist constants $C_{d,\alpha}, c_{d,\alpha}>0$ depending only on $d$ and $\alpha$ such that for all $n,m\in\Z^d$ with $n\neq m$  
\beq \label{eq:decay}
\frac{c_{\alpha,d}}{|n-m|^{d+2\alpha}} \leq -(-\Delta)^\alpha(n,m)\leq \frac{C_{\alpha,d}}{|n-m|^{d+2\alpha}}.
\eeq
\end{enumerate}
\end{theorem}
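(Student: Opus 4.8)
The plan is to start from the integral representation of the fractional power via the discrete heat semigroup,
\[
(-\Delta)^\alpha
= \frac{1}{|\Gamma(-\alpha)|}\int_0^\infty \bigl(\eexp^{-t(-\Delta)}-\id\bigr)\,\frac{\dt}{t^{1+\alpha}},
\]
valid for $0<\alpha<1$, and to read off the matrix elements. Since $(-\Delta)$ is the generator of the continuous-time simple random walk on $\Z^d$ (up to normalisation), the heat kernel $p_t(n,m):=\eexp^{-t(-\Delta)}(n,m)$ is strictly positive for all $t>0$ and all $n,m$, and is a probability kernel in $m$. Hence for $n\neq m$ the integrand equals $p_t(n,m)/t^{1+\alpha}>0$, giving $(-\Delta)^\alpha(n,m)=-\tfrac{1}{|\Gamma(-\alpha)|}\int_0^\infty p_t(n,m)\,t^{-1-\alpha}\dt<0$, and finiteness of this integral follows from the short-time bound $p_t(n,m)\le C t^{|n-m|_1}$ (only walks of length $\ge|n-m|_1$ reach $m$) together with the Gaussian-type decay $p_t(n,m)\le C t^{-d/2}\eexp^{-c|n-m|^2/t}$ for the large-$t$ regime. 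The diagonal statement $(-\Delta)^\alpha(n,n)>0$ then follows because each row of $(-\Delta)^\alpha$ sums to $0$ (it annihilates constants, being a function of $-\Delta$ which does), so the positive diagonal entry must balance the negative off-diagonal ones; strict positivity uses that at least one off-diagonal entry is nonzero. This proves (i).

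For (ii), the strategy is a scaling/diffusive limit. Fix a direction and let $|n-m|\to\infty$; substitute $t=|n-m|^2 s$ in the integral, obtaining
\[
-(-\Delta)^\alpha(n,m)
= \frac{|n-m|^{-2\alpha}}{|\Gamma(-\alpha)|}\int_0^\infty p_{|n-m|^2 s}(n,m)\,s^{-1-\alpha}\ds .
\]
By the local central limit theorem for the continuous-time random walk on $\Z^d$, $p_{|n-m|^2 s}(n,m)\sim (2\pi\sigma^2 s)^{-d/2}|n-m|^{-d}\eexp^{-|\hat u|^2/(2\sigma^2 s)}$ as $|n-m|\to\infty$, where $\hat u$ is the unit vector in the direction $n-m$ and $\sigma^2$ is the per-coordinate variance of the walk (here $\sigma^2=2$ after matching the normalisation of $-\Delta$ in (A)); the direction-dependence disappears after integrating against $s^{-1-\alpha}\ds$ by rotational invariance of the Gaussian. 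Multiplying by $|n-m|^{d+2\alpha}$ and using a dominated-convergence argument — with the short-time and Gaussian bounds providing the integrable majorant uniformly in $|n-m|$ — yields
\[
K_{d,\alpha}
= \frac{(2\pi\sigma^2)^{-d/2}}{|\Gamma(-\alpha)|}\int_0^\infty s^{-d/2-1-\alpha}\eexp^{-|e_1|^2/(2\sigma^2 s)}\ds
= \frac{1}{|\Gamma(-\alpha)|}\,\frac{\Gamma(d/2+\alpha)}{\pi^{d/2}\,4^{-\alpha}},
\]
after the substitution $r=1/(2\sigma^2 s)$ and evaluation of the Gamma integral, which with $\sigma^2=2$ reproduces $K_{d,\alpha}=4^\alpha\Gamma(d/2+\alpha)/(\pi^{d/2}|\Gamma(-\alpha)|)$.

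Part (iii) is then immediate: by (i) the quantity $|n-m|^{d+2\alpha}(-(-\Delta)^\alpha(n,m))$ is strictly positive for every $n\neq m$, and by (ii) it converges to the strictly positive limit $K_{d,\alpha}$ as $|n-m|\to\infty$; hence it is bounded above and below by positive constants on the complement of any finite set, and adjusting the constants to cover the finitely many remaining pairs (using translation invariance, there are only finitely many relevant values of $n-m$ below any fixed radius) gives the two-sided bound with $c_{\alpha,d},C_{\alpha,d}>0$ depending only on $d,\alpha$.

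\textbf{Main obstacle.} The delicate point is the uniform control needed to justify dominated convergence in (ii): one must produce a single $s$-integrable majorant for $|n-m|^d\,p_{|n-m|^2 s}(n,m)$ valid for all large $|n-m|$ and all $s>0$. Near $s=0$ the naive Gaussian bound $C s^{-d/2}\eexp^{-c/s}$ is harmless, but one must make sure the local CLT error terms are also controlled there (the combinatorial bound $p_t(n,m)\le C t^{|n-m|_1}$ for small $t$ handles the regime $t\lesssim 1$, i.e. $s\lesssim|n-m|^{-2}$); for large $s$ one needs $p_{|n-m|^2 s}(n,m)\le C(|n-m|^2 s)^{-d/2}$, and the factor $s^{-1-\alpha}$ supplies integrability at $\infty$ only because $\alpha>0$. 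Assembling these three regimes into one clean estimate, with constants independent of the direction of $n-m$, is the technical heart of the argument; everything else is bookkeeping with the integral representation and the known random-walk asymptotics.
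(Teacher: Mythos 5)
Your argument is correct in substance, and for part (ii) it takes a genuinely different route from the paper. For (i) both proofs start from the same subordination formula; the paper verifies strict positivity of $e^{t\Delta}(n,m)$ by tensorizing into one-dimensional kernels $e^{-2t}I_{|n_1-m_1|}(2t)$ and quoting an integral representation of the modified Bessel function, while you invoke the probabilistic positivity of the continuous-time walk kernel -- equivalent content, though note two small points: your displayed Balakrishnan formula has the sign of $\bigl(e^{-t(-\Delta)}-\id\bigr)$ reversed (your subsequent matrix-element identity is the correct one), and the ``annihilates constants'' row-sum argument for $(-\Delta)^\alpha(n,n)>0$ needs justification since constants are not in $\ell^2$; either argue via Fubini using $\sum_m p_t(n,m)=1$ together with $\int_0^\infty\min(1,Ct)\,t^{-1-\alpha}\,\d t<\infty$, or simply observe, as the paper does, that $e^{t\Delta}(n,n)<1$ for $t>0$. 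For (ii) the paper stays on the Fourier side: it cuts the symbol $\bigl(\sum_j(2-2\cos k_j)\bigr)^\alpha$ with a smooth bump at $0$, uses rapid decay of the Fourier coefficients of the smooth part, rewrites the cusp part as $|k|^{2\alpha}\Phi(k)$ with $\Phi\in C_c^\infty$, $\Phi(0)=1$, and applies its Appendix Lemma on the $|x|^{-d-2\alpha}$ decay of $(-\Delta_c)^\alpha$ applied to a Schwartz function. Your rescaling $t=|n-m|^2s$ plus the local CLT is a legitimate alternative, and your Gamma-integral evaluation does reproduce $K_{d,\alpha}=4^\alpha\Gamma(d/2+\alpha)/(\pi^{d/2}|\Gamma(-\alpha)|)$ (the per-coordinate variance of the walk generated by $-\Delta$ is $2t$, matching the continuous heat kernel, which explains the coincidence of constants). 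What your route buys is a transparent probabilistic mechanism and an explicit constant computation; what it costs is importing uniform random-walk inputs the paper never needs: a Gaussian upper bound in the diffusive regime $t\gtrsim|n-m|$, a Poisson-tail bound for $t\lesssim|n-m|$ (where the naive bound $Ct^{-d/2}e^{-c|n-m|^2/t}$ fails), and a continuous-time LCLT with error control; in particular no single $s$-integrable majorant exists on all of $(0,\infty)$, so one should split off $s\lesssim 1/|n-m|$, where the Poisson-tail bound makes that piece vanish, and dominate only on the remaining range -- which is essentially what your ``three regimes'' remark proposes. Part (iii) is then obtained exactly as in the paper: strict positivity from (i), the limit from (ii), and a finite-set adjustment using translation invariance.
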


\begin{remark}
In $d=1$, one can compute $(-\Delta)^{\alpha}(n,m)$ explicitly which results in a two-sided bound of the above form, see e.g. \cite{MR3787555}. While in higher dimensions this bound is generally accepted, we provide a rigorous proof and show that the decay of the fractional discrete Laplacian is indeed the same as in the continuous case, where this bound is well-known in arbitrary dimensions, see e.g. \cite{MR1054115}. The constant $K_{d,\alpha}$ is precisely the constant appearing in the definition of the continuous fractional Laplacian, see equation \eqref{eq:int_lap}. 
\end{remark}

Under additional assumptions on the regularity of the random variables, the off-diagonal decay in \eqref{eq:decay}  readily implies using  \cite[Thm. 3.1]{MR1244867}:

\begin{corollary}[Localisation at strong disorder]
Let $0<\alpha\leq 1$ and suppose that the measure  $P_0$ is absolutely continuous with respect to Lebesgue measure. Then, there exists a coupling $\lambda_0(d)>0$ such that for all $\lambda\geq \lambda_0(d)$  the spectrum of $(-\Delta)^\alpha + \lambda V_\omega$ consists of  point spectrum only.
\end{corollary}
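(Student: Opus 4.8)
The plan is to deduce the corollary directly from the Aizenman--Molchanov fractional moment criterion, in the form \cite[Thm.\ 3.1]{MR1244867}, after feeding the off-diagonal decay of Theorem~\ref{Lemma} into its hypotheses. That criterion concerns operators $H_0+\lambda V_\omega$ on $\ell^2(\Z^d)$ with $H_0$ a fixed bounded self-adjoint hopping operator, $V_\omega$ the i.i.d.\ potential of assumption~(B) together with the \emph{additional} requirement — which is precisely the extra hypothesis of the corollary — that $P_0$ be absolutely continuous, and with the off-diagonal summability condition
\beq\label{eq:AMcond}
\sup_{n\in\Z^d}\sum_{m\neq n}\abs{H_0(n,m)}^{s}<\infty\qquad\text{for some }s\in(0,1).
\eeq
Under these assumptions \cite[Thm.\ 3.1]{MR1244867} provides a disorder threshold $\lambda_0$ such that for $\lambda\ge\lambda_0$ the fractional moments $\E\bigl[\,\abs{(H_0+\lambda V_\omega-E-\i 0)^{-1}(n,m)}^{s}\,\bigr]$ decay in $\abs{n-m}$ at a rate that is summable over the lattice; by the standard consequences of such bounds (via the Simon--Wolff criterion, or equivalently via summable eigenfunction correlators) the almost sure spectrum of $H_0+\lambda V_\omega$ is then pure point. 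So the task reduces to checking \eqref{eq:AMcond} for $H_0=(-\Delta)^\alpha$; the absolute continuity of $P_0$ supplies the uniform a~priori bound $\E\bigl[\,\abs{(H_0+\lambda V_\omega-E-\i 0)^{-1}(n,m)}^{s}\,\bigr]\le C_s$ on which the criterion rests and is classical.

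First I would handle $\alpha=1$ separately: then $(-\Delta)^1=-\Delta$ is nearest-neighbour, \eqref{eq:AMcond} is immediate, and the statement is the classical localisation at large disorder for the Anderson model. For $0<\alpha<1$ I would invoke Theorem~\ref{Lemma}(iii): by \eqref{eq:decay}, $\abs{(-\Delta)^\alpha(n,m)}=-(-\Delta)^\alpha(n,m)\le C_{\alpha,d}\,\abs{n-m}^{-(d+2\alpha)}$ for all $n\neq m$. Since $\alpha>0$ the exponent $d+2\alpha$ strictly exceeds $d$, so the interval $\bigl(\tfrac{d}{d+2\alpha},1\bigr)$ is non-empty; fixing any $s$ in it gives
\beq
\sup_{n\in\Z^d}\sum_{m\neq n}\abs{(-\Delta)^\alpha(n,m)}^{s}\le C_{\alpha,d}^{\,s}\sum_{k\in\Z^d\setminus\{0\}}\abs{k}^{-s(d+2\alpha)}<\infty ,
\eeq
which is \eqref{eq:AMcond}. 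The translation-invariant diagonal $(-\Delta)^\alpha(n,n)>0$ from Theorem~\ref{Lemma}(i) merely shifts the random potential by a constant and is harmless. Applying \cite[Thm.\ 3.1]{MR1244867} then yields the coupling threshold $\lambda_0$ — which one should record as depending on $d$ and, through $C_{\alpha,d}$ and the admissible range of $s$, on $\alpha$ — above which the spectrum of $(-\Delta)^\alpha+\lambda V_\omega$ is almost surely pure point, as claimed.

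I do not anticipate a genuine obstacle: all the analytic work resides in Theorem~\ref{Lemma}, and the fractional moment method of \cite{MR1244867} is designed precisely to accommodate long-range, non-finite-range hopping provided its decay power beats the dimension — which here holds with room to spare since $d+2\alpha>d$. The only points demanding a line of care are the two routine ones noted above: that the absolute continuity of $P_0$ delivers the uniform a~priori fractional-moment bound, and that a fractional exponent $s<1$ with $s(d+2\alpha)>d$ genuinely exists for every $\alpha>0$ (and hence that $\lambda_0$ can be taken depending only on $d$ and $\alpha$).
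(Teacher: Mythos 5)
Your proposal is correct and follows exactly the route the paper takes: the paper derives the corollary directly from the off-diagonal bound of Theorem~\ref{Lemma}(iii) combined with \cite[Thm.~3.1]{MR1244867}, which is precisely your argument, including the observation that $s(d+2\alpha)>d$ is achievable for some $s<1$ because $d+2\alpha>d$. Your remark that the threshold should really be recorded as $\lambda_0(d,\alpha)$ rather than $\lambda_0(d)$ is a fair (minor) point about the statement, not a gap in the proof.
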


\begin{remarks}
\item
The spatial decay of the eigenfunctions is related to the decay rate of the Green's function \cite{MR820340}. Due to the polynomial off-diagonal decay of the matrix elements of $(-\Delta)^\alpha$ we obtain polynomial decay of the Green's function of $H_\alpha$. This indicates that eigenfunctions of $H_\alpha$ decay at least polynomially.
\item
In $d=1$ it is conjectured that our model exhibits a phase transition when varying the fractional parameter $\alpha$: For $\alpha<\frac 1 2$, it is expected that $H_\alpha$ has continuous spectrum whereas for $\alpha>\frac 1 2$ it has point spectrum only, see \cite{MR1668075} and also \cite{MR1017742}. 
\end{remarks}

Our main result shows that the IDS $N_\alpha$  exhibits a fractional Lifshitz-tail behaviour at the lower edge of the almost-sure spectrum, which is $0$ under our assumptions.

\begin{theorem}[Fractional Lifshitz tails]\label{Theorem}
Let $0<\alpha\leq1$. The integrated density of states $N_\alpha$ of $H_\alpha$ satisfies
\beq
\lim_{E\searrow 0} \frac{\ln |\ln N_\alpha(E)|}{\ln E} \leq -\frac d {2\alpha} .
\eeq
Under the additional assumption that $P_0([0,\varepsilon))\geq C \varepsilon^\kappa$ for some $C,\kappa>0$ we obtain equality, i.e.
\beq
\lim_{E\searrow 0} \frac{\ln |\ln N_\alpha(E)|}{\ln E} = -\frac d {2\alpha} .
\eeq
\end{theorem}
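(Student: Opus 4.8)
The plan is to follow the classical two-sided strategy for Lifshitz tails, adapted via Dirichlet–Neumann bracketing and operator monotonicity of $x\mapsto x^\alpha$. For the \emph{upper bound} on $N_\alpha(E)$ (i.e.\ the inequality $\lim_{E\searrow 0}\ln|\ln N_\alpha(E)|/\ln E\le -d/(2\alpha)$), I would fix a small box $\Lambda_\ell$, use Proposition \ref{Proposition} together with a Neumann-type bracketing to bound $N_\alpha(E)$ from above by the averaged eigenvalue counting function of the Neumann restriction of $H_\alpha$ to $\Lambda_\ell$. The key point is that on an event of probability $\gtrsim P_0([0,\varepsilon))^{|\Lambda_\ell|}$ all the potential values in the box are at most $\varepsilon$, and on its complement one uses $V_\omega\ge 0$. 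The ground-state energy of $(-\Delta)^\alpha$ with Neumann conditions on a box of side $\ell$ scales like $\ell^{-2\alpha}$ (this is where operator monotonicity enters: $((-\Delta)_N^{\Lambda_\ell})^\alpha \le ((-\Delta)^\alpha)_N^{\Lambda_\ell}$ or an analogous comparison, so the first eigenvalue of the fractional Neumann operator is bounded below by the $\alpha$-th power of the first eigenvalue of the ordinary Neumann Laplacian, which behaves like $c/\ell^2$). Choosing $\ell\sim E^{-1/(2\alpha)}$ then makes the first Neumann eigenvalue of $(-\Delta)^\alpha$ comparable to $E$, and optimising $\varepsilon$ gives $N_\alpha(E)\lesssim \exp(-c E^{-d/(2\alpha)})$ up to logarithmic corrections, which is exactly the claimed one-sided bound.

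For the \emph{lower bound} (needed only under the extra assumption $P_0([0,\varepsilon))\ge C\varepsilon^\kappa$), I would use a Dirichlet-type bracketing: restrict $H_\alpha$ to a box $\Lambda_\ell$ with Dirichlet conditions, so that $N_\alpha(E)\ge |\Lambda_\ell|^{-1}\,\E[\#\{\text{eigenvalues of }H_{\alpha,\ell}^D\le E\}]\ge |\Lambda_\ell|^{-1}\PP(\text{lowest Dirichlet eigenvalue}\le E)$. On the event that all $\omega_n\le\varepsilon$ for $n\in\Lambda_\ell$, the lowest Dirichlet eigenvalue of $H_\alpha$ is at most $\lambda\varepsilon$ plus the lowest Dirichlet eigenvalue of $(-\Delta)^\alpha$ on $\Lambda_\ell$; the latter is bounded above by $C\ell^{-2\alpha}$ — again via operator monotonicity, comparing with the $\alpha$-th power of the Dirichlet Laplacian's ground-state energy, or directly by a trial-function estimate using a smooth bump supported in $\Lambda_\ell$ and the Fourier/quadratic-form representation of $(-\Delta)^\alpha$. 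Taking $\ell\sim E^{-1/(2\alpha)}$ and $\varepsilon\sim E$ gives $N_\alpha(E)\ge |\Lambda_\ell|^{-1}P_0([0,\varepsilon))^{|\Lambda_\ell|}\gtrsim E^{d/(2\alpha)}\exp(-c\,\ell^d|\ln\varepsilon|)\ge \exp(-c' E^{-d/(2\alpha)}\ln(1/E))$, which yields $\lim_{E\searrow 0}\ln|\ln N_\alpha(E)|/\ln E\ge -d/(2\alpha)$ and hence, combined with the upper bound, equality.

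The main obstacle is making the Dirichlet–Neumann bracketing rigorous for the \emph{nonlocal} operator $(-\Delta)^\alpha$: unlike the local Laplacian, the fractional Laplacian does not split cleanly along a partition of $\Z^d$ into boxes, so one cannot simply drop off-block matrix elements and preserve the quadratic-form ordering. The clean way around this is precisely the route advertised in the introduction — avoid restricting $(-\Delta)^\alpha$ directly and instead restrict the \emph{ordinary} Laplacian $-\Delta$ (which \emph{is} local and for which Neumann/Dirichlet decoupling is standard) to the box, then apply operator monotonicity of $t\mapsto t^\alpha$ on $[0,\infty)$ to transfer the bracketing inequalities to the $\alpha$-th powers. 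Concretely: $(-\Delta)^D_{\Lambda_\ell}\ge -\Delta$ as forms on $\ell^2(\Lambda_\ell)$ (after the obvious identifications), hence $\big((-\Delta)^D_{\Lambda_\ell}\big)^\alpha\ge\big((-\Delta)\big)^\alpha\big|_{\Lambda_\ell}$, and similarly with Neumann conditions and the reverse inequality; this is the single place where the hypothesis $0<\alpha\le1$ is essential. Once the bracketing is in this form the rest is the standard large-deviation counting argument above, and Proposition \ref{Proposition} guarantees that the finite-volume counting functions converge to $N_\alpha$ so that these box estimates actually control the IDS.
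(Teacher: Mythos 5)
Your overall architecture --- bracket the \emph{local} Laplacian with Neumann/Dirichlet boundary conditions first and only then pass to the $\alpha$-th powers via operator monotonicity of $x\mapsto x^\alpha$ for $0<\alpha\le 1$, with length scale $L\sim E^{-1/(2\alpha)}$ --- is exactly the paper's route (Lemma \ref{Pf:thm:lm} and \eqref{Pf:thm:eq2}), and your lower-bound sketch (all $\omega_n\le\eps$ in the box, the bound $E_0\big((-\Delta^D_L)^\alpha\big)\le C L^{-2\alpha}$ by spectral mapping or a trial function, probability at least $(C\eps^\kappa)^{|\Lambda_L|}$, then $\eps\sim E$) is the standard argument the paper cites. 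The genuine gap is in your upper bound: after reducing to $N_\alpha(E)\le \PP\big[E_0(H^N_{\alpha,L})<E\big]$ you give no valid reason why this probability is exponentially small in $|\Lambda_L|$. The event you invoke --- all potential values in the box at most $\eps$, of probability about $P_0([0,\eps))^{|\Lambda_L|}$ --- is the mechanism of the \emph{lower} bound; the implicit inequality $\PP[E_0<E]\le\PP[\text{all }\omega_n\le\eps]$ is false, since a few large values of $\omega_n$ barely raise the ground-state energy, and ``using $V_\omega\ge 0$ on the complement'' yields nothing beyond $E_0\ge 0$. Note also that the Neumann ground-state energy of $(-\Delta^N_L)^\alpha$ is $0$ (constant function), not of order $L^{-2\alpha}$; what scales like $L^{-2\alpha}$ is the spectral gap $E_1$, see \eqref{Pf:thm:eq4}. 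The missing ingredient is a quantitative lower bound on $E_0(H^N_{\alpha,L})$ in terms of the empirical mean of the potential: the paper truncates $\wtilde\omega(j)=\min\{\omega(j),\tfrac{c^\alpha}{3}L^{-2\alpha}\}$, applies Temple's inequality with the constant trial function $\psi$ (using $\<\psi,(-\Delta^N_L)^\alpha\psi\>=0$ and the gap bound) to get $E_0(\wtilde H^N_{\alpha,L})\ge\tfrac{1}{2|\Lambda_L|}\sum_j\wtilde\omega(j)$, and then a large-deviation estimate gives $\PP[E_0<E]\le e^{-\gamma'E^{-d/(2\alpha)}}$; no optimisation over $\eps$ and no information on $P_0([0,\eps))$ enters the upper bound at all.

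A smaller repair: as written, ``$-\Delta^D_{\Lambda}\ge-\Delta$ as forms on $\ell^2(\Lambda)$, hence the same for the $\alpha$-th powers'' is not an application of operator monotonicity, because the right-hand side is really the compression $1_\Lambda(-\Delta)1_\Lambda$ and compression does not commute with taking powers. Either argue as the paper does --- write the full-space inequalities $-\Delta^N_{\Lambda}\oplus(-\Delta^N_{\Z^d\setminus\Lambda})\le-\Delta\le-\Delta^D_{\Lambda}\oplus(-\Delta^D_{\Z^d\setminus\Lambda})$, apply $x\mapsto x^\alpha$ there, and only then compress by $1_\Lambda$, which is compatible with the block structure --- or supplement monotonicity with the operator Jensen (Hansen) inequality $(1_\Lambda B 1_\Lambda)^\alpha\ge 1_\Lambda B^\alpha 1_\Lambda$. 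With these two points fixed, your outline coincides with the paper's proof.
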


\begin{remark}
Fractional Lifshitz tails are known in the continuous setting: they were first obtained by \^{O}kura in \cite{MR736015} for the continuous fractional Laplacian perturbed by Poissonian random potentials.
 The result also extends to Gaussian perturbations \cite{RM-P}. Similar results for Poissonian models on fractals were obtained in Kaleta and Pietruska-Paluba in \cite{{MR3860014}} and recently for the Anderson model on $\rm L^2(\R^d)$ in \cite{kaleta2019lifschitz,kaleta2019lifschitzb}.
\end{remark}

\section{Proof of Proposition \ref{Proposition}}\label{s:fin_vol_approxIDS}

\begin{proof}[Proof of Proposition \ref{Proposition}]
We restrict ourselves to $0<\alpha<1$ in the proof. 
Following \cite[Sec. 5.4]{MR2509110}, it suffices to show that
for all $z\in \C\setminus \R$
\beq\label{Pf:Prop:Eq0}
\lim_{L\to\infty}\frac 1 {|\Lambda_L|} \Tr\Big[ \frac 1 {H_{\alpha,L}-z} - 1_L \frac 1 {H_\alpha-z} 1_L\Big]  =0.
\eeq
Let $z\in \C\setminus \R$.  We estimate using the resolvent equation
\begin{align}
&\Big|\Tr\Big[ \frac 1 {H_{\alpha,L}-z} - 1_L \frac 1 {H_\alpha-z} 1_L\Big] \Big|\notag\\
&\leq
\sum_{n\in\Lambda_L} \Big|\frac 1 {H_{\alpha,L}-z} (n,n)- \frac 1 {H_\alpha-z}(n,n)\Big|\notag\\
&\leq
\sum_{n\in\Lambda_L}\sum_{k\in\Lambda_L} \sum_{m\in \Z^d}\Big|\frac 1 {H_{\alpha,L}-z} (n,k) \big( (-\Delta)_{L}^\alpha -(-\Delta)^\alpha\big)(k,m) \frac 1 {H_\alpha-z}(m,n)\Big|,\label{Pf:Prop:Eq1}
\end{align}
where $(-\Delta)^\alpha_L:= 1_L (-\Delta)^\alpha 1_L$.
Now $ (-\Delta)^\alpha_{L}- (-\Delta)^\alpha= 1_L (-\Delta)^\alpha 1_{L^c} + 1_{L^c} (-\Delta)^\alpha 1_L+1_{L^c} (-\Delta)^\alpha 1_{L^c}$, with $1_{L^c}$ denoting the projection onto $\R^d\setminus \Lambda_L$. The off-diagonal decay of the matrix elements stated in Theorem \ref{Lemma}
 and Cauchy-Schwarz inequality imply
\begin{align}
\eqref{Pf:Prop:Eq1} \leq
& \sum_{k\in\Lambda_L}\sum_{m\in \Z^d}\big| \big( (-\Delta)^\alpha_{L} -(-\Delta)^\alpha\big)(k,m)\big|
\big\|\frac 1 {H_{\alpha,L}-z}\delta_k\big\| \big\|\frac 1 {H_{\alpha}-z}\delta_m\big\|\notag\\
\leq&
\frac {C_{d,\alpha}} {|\Im z|^2}  \sum_{k\in\Lambda_L}\sum_{m\in\Lambda^c _L} \frac 1 {|k-m|^{d+2\alpha}}.
\label{pf:IDS:eq0}
\end{align}
Let $2\alpha<1$. In this case
we estimate the latter double sum by the corresponding integral, i.e.
\begin{align}
 \sum_{k\in\Lambda_L}\sum_{m\in\Lambda^c _L} \frac 1 {|k-m|^{d+2\alpha}} \leq \int_{\Lambda_{L+1/2}} \d x \int_{\Lambda_{L+1/2}^c} \d y\, \frac 1 {|x-y|^{d+2\alpha}},\label{pf:IDS:eq1}
\end{align}
where on the right hand side in slight abuse of notation $\Lambda_{L+1/2}=(-L- 1/2, L+1/2)^d\subset\R^d$ is the box in the continuum.
After a change of variables this reads
\begin{align}\label{pf:IDS:eq2}
\eqref{pf:IDS:eq1} = \Big(L+\frac 1 2\Big)^{d-2\alpha} \int_{\Lambda_1} \d x \int_{\Lambda_1^c} \d y\, \frac 1 {|x-y|^{d+2\alpha}},
\end{align}
which is finite for $2\alpha<1$. In order to see this, we note that $\Lambda_1^c\subset B^c_{\dist(x,\Lambda_1^c)}(x):=\R^d\setminus B_{\dist(x,\Lambda_1^c)}(x)$ for all $x\in (-1,1)^d$. Here $B_a(x)$ stands for the ball of radius $a>0$ around $x\in\R^d$ and $\dist(x,S)$ for the Euclidean distance of $x\in\R^d$ to a set $S\subset \R^d$. This and a change of variables imply
\begin{align}
\int_{\Lambda_1} \d x \int_{\Lambda_1^c} \d y\, \frac 1 {|x-y|^{d+2\alpha}}
&\leq \int_{\Lambda_1} \d x\, \int_{B^c_{\dist(x,\Lambda_1^c)}(x)} \d y\, \frac 1 {|x-y|^{d+2\alpha}}
\notag\\
&=\omega_{d-1}\int_{\Lambda_1} \d x\, \int_{\dist(x,\Lambda_1^c)}^\infty \d r\,r^{d-1} \frac 1 {r^{d+2\alpha}}\notag\\
&= \frac {\omega_{d-1}} {2\alpha} \int_{\Lambda_1} \d x\, \frac 1 {\dist(x,\Lambda_1^c)^{2\alpha}},
\end{align}
where $\omega_{d-1}$ is the surface area of the $d$-dimensional unit sphere.
We note that $\frac 1 {\dist(x,\Lambda_1^c)^{2\alpha}}\leq \frac 1{1-|x|_\infty}$.
Moreover, we denote by $\d S$ the surface measure. Hence, we bound the latter integral by
\begin{align}
\int_{\Lambda_1} \d x \frac 1 {\dist(x,\Lambda_1^c)^{2\alpha}}
\leq \int_0^1 \d r \int_{\partial\Lambda_r} \d S(k) \frac 1 {(1-r)^{2\alpha}}
= \int_0^1 \d r |\partial\Lambda_r|  \frac 1 {(1-r)^{2\alpha}},
\end{align}
where the latter is finite for $2\alpha<1$.

Now let $2\alpha\geq 1$, i.e. $\alpha\geq \frac 1 2$. We reduce this case to the one considered before, i.e. we estimate the double sum in \eqref{pf:IDS:eq0} by
\beq
 \sum_{k\in\Lambda_L}\sum_{m\in\Lambda^c _L} \frac 1 {|k-m|^{d+2\alpha}} \leq  \sum_{k\in\Lambda_L}\sum_{m\in\Lambda^c _L} \frac 1 {|k-m|^{d+1/2}}\leq  C L^{d-1/2},
\eeq
for some constant $C>0$, where the bound in the r.h.s. follows from the analysis before.
Hence, we obtain that
\begin{align}
\eqref{pf:IDS:eq0} \leq
\frac {\wtilde C_{d,\alpha}} {|\Im z|^2} L^{d-\min\{2\alpha,1/2\}},
\end{align}
for some constant $\wtilde C_{d,\alpha}>0$ depending only on $d$ and $\alpha$. Plugging this into \eqref{Pf:Prop:Eq1}, dividing  by $\frac 1 {|\Lambda_L|}$ and taking the limit $L\to\infty$, gives \eqref{Pf:Prop:Eq0} and the assertion follows.
\end{proof}

\section{Proof of Theorem \ref{Theorem}}\label{s:thm_LT}

In this section we first prove Dirichlet-Neumann bracketing for our model. To do so,
we denote by $-\Delta^{N/D}_{\Lambda}$ the restriction of $-\Delta$ to the set $\Lambda\subset\Z^d$ with Neumann, respectively Dirichlet boundary conditions on the boundary of $\Lambda$. For the precise definition of $-\Delta_\Lambda^{N/D}$ we refer to \cite[Sec. 5.2]{MR2509110}. In particular, we write $-\Delta^{N/D}_{L}$ for the restriction to the box $\Lambda_L$ with the respective boundary conditions. Moreover, we write $H^{N/D}_{\alpha,L}$ for $(-\Delta_L^{N/D})^\alpha+ V_{\omega,L}$ with $V_{\omega,L}:= 1_L V_\omega 1_L$. In this section we set the coupling $\lambda=1$. 

\begin{lemma}[Dirichlet-Neumann bracketing]\label{Pf:thm:lm}
Let $0<\alpha\leq 1$ and $\Lambda_1\subset \Lambda_2\subset \Z^d$ be boxes. Then we have the following operator inequalities
\beq\label{Pf:thm:eq0}
1_{\Lambda_2} (-\Delta)^\alpha 1_{\Lambda_2} \leq
\big(-\Delta^D_{\Lambda_2}\big)^\alpha
\leq \big(-\Delta^D_{\Lambda_1}\big)^\alpha\oplus \big(-\Delta^D_{\Lambda_{2}\setminus\Lambda_1}\big)^\alpha
\eeq
and
\beq\label{Pf:thm:eq1}
1_{\Lambda_2} (-\Delta)^\alpha 1_{\Lambda_2} \geq
\big(-\Delta^N_{\Lambda_2}\big)^\alpha
\geq \big(-\Delta^N_{\Lambda_1}\big)^\alpha\oplus \big(-\Delta^N_{\Lambda_{2}\setminus\Lambda_1}\big)^\alpha.
\eeq
\end{lemma}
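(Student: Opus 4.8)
The strategy is to reduce both chains of inequalities to the case $\alpha=1$ --- the classical discrete Dirichlet--Neumann bracketing, which is a quadratic-form computation --- and then to lift them to general $0<\alpha\le 1$ using only two soft inputs: operator monotonicity of $t\mapsto t^\alpha$ on $[0,\infty)$ (L\"owner--Heinz), and the fact that the functional calculus commutes with orthogonal direct sums, $(A\oplus B)^\alpha=A^\alpha\oplus B^\alpha$. Note that every operator that appears is bounded and non-negative --- $0\le -\Delta\le 4d$, and the boundary terms distinguishing $-\Delta^{D/N}_\Lambda$ from the plain restriction $1_\Lambda(-\Delta)1_\Lambda$ are bounded by $4d$ --- so $t\mapsto t^\alpha$ is only ever evaluated on a fixed compact interval and no domain issues arise.

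\emph{The splitting inequalities} (the right-hand inequality in each of \eqref{Pf:thm:eq0} and \eqref{Pf:thm:eq1}). For $\alpha=1$ one has the classical discrete bracketing on $\ell^2(\Lambda_2)=\ell^2(\Lambda_1)\oplus\ell^2(\Lambda_2\setminus\Lambda_1)$,
\[
-\Delta^D_{\Lambda_2}\ \le\ \big(-\Delta^D_{\Lambda_1}\big)\oplus\big(-\Delta^D_{\Lambda_2\setminus\Lambda_1}\big),\qquad -\Delta^N_{\Lambda_2}\ \ge\ \big(-\Delta^N_{\Lambda_1}\big)\oplus\big(-\Delta^N_{\Lambda_2\setminus\Lambda_1}\big),
\]
obtained by writing the difference of the two sides as a sum over the bonds $\{x,y\}$ crossing the interface between $\Lambda_1$ and $\Lambda_2\setminus\Lambda_1$: in the Neumann case those bonds are deleted, so the difference is $\sum_{\{x,y\}}|\delta_x-\delta_y\rangle\langle\delta_x-\delta_y|\ge 0$; in the Dirichlet case, once the interface boundary mass contained in the definition of $-\Delta^D_{\Lambda_j}$ (see \cite[Sec.~5.2]{MR2509110}) is included, the elementary identity $2|a|^2+2|b|^2-|a-b|^2=|a+b|^2$ reorganizes the difference as $\sum_{\{x,y\}}|\delta_x+\delta_y\rangle\langle\delta_x+\delta_y|\ge 0$. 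Applying $t\mapsto t^\alpha$ to these operator inequalities and using $(A\oplus B)^\alpha=A^\alpha\oplus B^\alpha$ gives exactly the right-hand inequalities of \eqref{Pf:thm:eq0} and \eqref{Pf:thm:eq1}.

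\emph{Comparison with the free Laplacian} (the left-hand inequality in each of \eqref{Pf:thm:eq0} and \eqref{Pf:thm:eq1}). Here one cannot take $\alpha$-th powers of an inequality on $\ell^2(\Lambda_2)$, since compression does not commute with the functional calculus: $1_{\Lambda_2}(-\Delta)^\alpha 1_{\Lambda_2}\neq\big(1_{\Lambda_2}(-\Delta)1_{\Lambda_2}\big)^\alpha$ when $\alpha\neq 1$. Instead I would detour through operators on all of $\ell^2(\Z^d)=\ell^2(\Lambda_2)\oplus\ell^2(\Lambda_2^c)$: form the block operators $\big(-\Delta^N_{\Lambda_2}\big)\oplus\big(-\Delta^N_{\Lambda_2^c}\big)$ and $\big(-\Delta^D_{\Lambda_2}\big)\oplus\big(-\Delta^D_{\Lambda_2^c}\big)$ and compare their quadratic forms with that of $-\Delta$, which differ only through the bonds across $\partial\Lambda_2$ and the associated boundary mass; this gives
\[
\big(-\Delta^N_{\Lambda_2}\big)\oplus\big(-\Delta^N_{\Lambda_2^c}\big)\ \le\ -\Delta\ \le\ \big(-\Delta^D_{\Lambda_2}\big)\oplus\big(-\Delta^D_{\Lambda_2^c}\big)
\]
on $\ell^2(\Z^d)$ (the lower bound deletes the interface bonds; the upper bound again uses $2|a|^2+2|b|^2-|a-b|^2=|a+b|^2$). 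Taking $\alpha$-th powers --- here operator monotonicity of $t\mapsto t^\alpha$ enters --- and using $(A\oplus B)^\alpha=A^\alpha\oplus B^\alpha$ yields the same inequality with all three operators raised to the power $\alpha$. Compressing with $1_{\Lambda_2}$ --- which preserves operator inequalities and sends $X\oplus Y$ to $X$ --- makes the $\Lambda_2^c$-blocks drop out and leaves $\big(-\Delta^N_{\Lambda_2}\big)^\alpha\le 1_{\Lambda_2}(-\Delta)^\alpha 1_{\Lambda_2}\le\big(-\Delta^D_{\Lambda_2}\big)^\alpha$; combining with the splitting step completes \eqref{Pf:thm:eq0} and \eqref{Pf:thm:eq1}.

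\emph{Where the difficulty lies.} Conceptually only two points need care. First, the bookkeeping of the interface boundary terms in the definition of $-\Delta^D_\Lambda$: with the naive choice $-\Delta^D_\Lambda=1_\Lambda(-\Delta)1_\Lambda$ the Dirichlet splitting inequality already fails for $\Lambda_1=\{0\}\subset\Lambda_2=\{0,1\}\subset\Z$ (the two sides are then not even comparable), so one must use the convention of \cite[Sec.~5.2]{MR2509110}, for which the boundary term is exactly large enough to turn the leftover operator into a sum of the rank-one projections above. Second, the observation that the comparison with $1_{\Lambda_2}(-\Delta)^\alpha 1_{\Lambda_2}$ must pass through $\ell^2(\Z^d)$ rather than being read off a fixed-volume inequality; equivalently, the left inequality of \eqref{Pf:thm:eq0} could be obtained from operator concavity of $t\mapsto t^\alpha$ via Jensen's operator inequality applied to the unital positive map $X\mapsto 1_{\Lambda_2}X1_{\Lambda_2}$, or from the integral representation $t^\alpha=\tfrac{\sin(\pi\alpha)}{\pi}\int_0^\infty \tfrac{t}{t+s}\,s^{\alpha-1}\,\mathrm{d}s$ combined with the Schur-complement bound $1_{\Lambda_2}(-\Delta+s)^{-1}1_{\Lambda_2}\ge\big(1_{\Lambda_2}(-\Delta+s)1_{\Lambda_2}\big)^{-1}$, but the route above stays within operator monotonicity. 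Once these are in place nothing computational remains.
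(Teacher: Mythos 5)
Your proposal is correct and follows essentially the same route as the paper: establish the $\alpha=1$ bracketing (for the free operator, on all of $\ell^2(\Z^d)$ relative to $\Lambda_2\cup\Lambda_2^c$, and for the splitting $\Lambda_2=\Lambda_1\cup(\Lambda_2\setminus\Lambda_1)$), apply L\"owner--Heinz operator monotonicity of $t\mapsto t^\alpha$ together with $(A\oplus B)^\alpha=A^\alpha\oplus B^\alpha$, and then compress with $1_{\Lambda_2}$. The only difference is cosmetic: you verify the classical $\alpha=1$ bracketing by the quadratic-form/boundary-term computation, whereas the paper simply cites \cite[Sec.~5.2]{MR2509110}, and your remark that one cannot apply $t^\alpha$ directly to $1_{\Lambda_2}(-\Delta)1_{\Lambda_2}$ is exactly the reason the paper's argument also detours through $\ell^2(\Z^d)$.
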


\begin{proof}
We first note that, using Dirichlet-Neumann bracketing for the discrete Laplacian \cite[Sec. 5.2]{MR2509110}, we obtain
\beq\label{opinequ1}
-\Delta^N_{\Lambda_2}\oplus -\Delta^N_{\Z^d\setminus \Lambda_2}\leq -\Delta\leq -\Delta^D_{\Lambda_2}\oplus -\Delta^D_{\Z^d\setminus \Lambda_2}.
\eeq
The function $x\mapsto x^\alpha$, $x\in[0,\infty)$ is operator monotone for $\alpha\in (0,1]$, i.e. for all self-adjoint operators $0\leq A\leq B$ we obtain
$0\leq A^\alpha\leq B^\alpha$ provided $\alpha\in (0,1]$, see \cite[Thm V.2.10]{MR1477662}.
Applying this to \eqref{opinequ1}, we obtain for $0<\alpha\leq 1$
\beq
\big(-\Delta^N_{\Lambda_2}\big)^\alpha\oplus \big(-\Delta^N_{\Z^d\setminus \Lambda_2}\big)^\alpha\leq (-\Delta)^\alpha\leq \big(-\Delta^D_{\Lambda_2}\big)^\alpha\oplus \big( -\Delta^D_{\Z^d\setminus \Lambda_2}\big)^\alpha
\eeq
and multiplying the above with $1_{\Lambda_2}$ from both sides implies
\beq\label{lm0_eq0}
\big(-\Delta^N_{\Lambda_2}\big)^\alpha\leq 1_{\Lambda_2} (-\Delta)^\alpha 1_{\Lambda_2} \leq \big(-\Delta^D_{\Lambda_2}\big)^\alpha.
\eeq
Similarly, Dirichlet-Neumann bracketing for the discrete Laplacian on $\Lambda_2= \Lambda_1\cup {\Lambda_{2}\setminus\Lambda_1}$ and  the operator monotonicity of $x\mapsto x^\alpha$ also imply that
\beq\big(-\Delta^N_{\Lambda_1}\big)^\alpha \oplus \big( -\Delta^N_{\Lambda_{2}\setminus\Lambda_1}\big)^\alpha
\leq
\big(-\Delta^{N/D}_{\Lambda_2} \big)^\alpha
\leq
 \big(-\Delta^D_{\Lambda_1}\big)^\alpha \oplus \big( -\Delta^D_{\Lambda_{2}\setminus\Lambda_1}\big)^\alpha.
\eeq
The latter together with inequality \eqref{lm0_eq0} gives the assertion.
\end{proof}

\begin{proof}[Proof of Theorem \ref{Theorem}]
The definition of the IDS $N_\alpha$ given in Proposition \ref{Proposition} and inequalities \eqref{Pf:thm:eq0} and \eqref{Pf:thm:eq1} in Lemma \ref{Pf:thm:lm} imply for all $L\in\N$
\beq\label{Pf:thm:eq2}
\frac 1 {|\Lambda_L|} \E\big[ \Tr 1_{(-\infty,E]}\big(H_{\alpha,L}^D\big) \big]
\leq
N_\alpha(E)
\leq
\frac 1 {|\Lambda_L|} \E\big[ \Tr 1_{(-\infty,E]}\big(H_{\alpha,L}^N\big) \big].
\eeq
 Now, basically the rest of the proof follows along the same lines as in \cite[Sec. 6]{MR2509110} where one has to choose the length scale $L\sim E^{\frac 1 {2\alpha}}$.

 For the reader's convenience, we sketch the proof of the upper bound in \eqref{Pf:thm:eq2} to see the emergence of the exponent $ d/(2\alpha)$. We have that
\begin{align}
\frac 1 {|\Lambda_L|} \E\big[ \Tr 1_{(-\infty,E]}\big(H^N_{\alpha,L}\big) \big]
&\leq
\mathbb P\big[ E_0\big(H^N_{\alpha,L}\big)  < E]\label{Pf:thm:eq3}
\end{align}
where $E_0(A)$ denotes the lowest eigenvalue and $E_1(A)$ denotes in the following the next bigger eigenvalue of a self-adjoint matrix $A$.
We aim at using Temple's inequality, see \cite[Lem. 6.3]{MR2509110}. To do so, we first perform some auxiliary calculations. A direct calculation shows that
$
E_1\big( -\Delta_L^N\big)\geq  c L^{-2}
$
for some constant $c>0$
and therefore
\beq\label{Pf:thm:eq4}
E_1\big( H^N_{\alpha,L}\big)\geq E_1\big( (-\Delta_L^N)^\alpha\big) \geq  c^\alpha L^{-2\alpha}.
\eeq
Next we set $\wtilde\omega(j) :=\min\big\{\omega(j), \frac {c^\alpha} 3 L^{-2\alpha}\big\}$ and define
\beq
\wtilde V_{\omega,L}= \sum_{j\in\Lambda_L} \wtilde\omega(j) |\delta_j\>\<\delta_j|\qquad\text{and}\qquad
\wtilde H^N_{\alpha,L}= (-\Delta^N_L)^\alpha + \wtilde V_{\omega,L}.
\eeq
Let $\psi\in\ell^2(\Z^d)$ be given by $\psi(n) = \frac 1 {|\Lambda_L|^{1/2}}$ for $n\in\Z^d$. Then have
\beq
\big\<\psi,\wtilde V_{\omega,L}\psi\big\>\leq \frac{c^\alpha} 3 L^{-2\alpha}.
\eeq
This, combined with \eqref{Pf:thm:eq4}, gives the estimate
\beq
E_1\big(\wtilde H^N_{\alpha,L}\big)
\geq
E_1\big((-\Delta^N_L)^\alpha \big)
\geq
\frac{c^\alpha} 3 L^{-2\alpha}
\geq
\big\<\psi, \wtilde H^N_{\alpha,L}\psi\big\>,
\label{inequalities}
\eeq
where for the last inequality we used the fact that $\<\psi, (-\Delta^N_L)^\alpha\psi\>=0$.
Hence, we are in position to apply Temple's inequality which implies the lower bound
\begin{align}
E_0\big(\wtilde H^N_{\alpha,L}\big)
&\geq
\<\psi,\wtilde H^N_{\alpha,L}\psi\> - \frac{\<\psi,(\wtilde H^N_{\alpha,L})^2\psi\>}{c^{\alpha}L^{-2\alpha}-\<\psi,\wtilde H^N_{\alpha,L}\psi\> }\notag\\
&\geq
\frac 1 {|\Lambda_L|} \sum_{j\in\Lambda_L} \wtilde \omega(j) \big( 1- \frac{3}{2 c^{\alpha}L^{-2\alpha}}  \frac {c^\alpha} 3 L^{-2\alpha}
\big)
= \frac 1 2 \frac 1 {|\Lambda_L|} \sum_{j\in\Lambda_L} \wtilde \omega(j), \label{Pf:thm:eq5}
\end{align}
where we used for the second inequality $\<\psi, (-\Delta^N_L)^\alpha\psi\>=0$ and \eqref{inequalities}.
This, \eqref{Pf:thm:eq2} and \eqref{Pf:thm:eq3} imply for all $L>0$ and $E\in\R$ that
\beq
N_\alpha(E)
\leq
\mathbb P\big[ E_0\big(H^N_{\alpha,L}\big)  < E \big]
\leq
\mathbb P\big[ E_0\big(\wtilde H^N_{\alpha,L}\big)  < E \big]
\leq
\mathbb P\big[  \frac 1 {|\Lambda_L|} \sum_{j\in\Lambda_L} \wtilde \omega(j)  < 2E\big].
\eeq
We choose the length scale $L>0$ according to
\beq
L:= \lfloor \beta E^{-\frac 1 {2\alpha}}\rfloor
\eeq
for some $\beta>0$ small. Then a large deviation estimate \cite[Lem. 6.4] {MR2509110} implies that there exist $\gamma'>\gamma>0$ such that
\begin{align}
\mathbb P\big[  \frac 1 {|\Lambda_L|} \sum_{j\in\Lambda_L} \wtilde \omega(j)  < 2E\big]
\leq e^{-\gamma|\Lambda_L|}
\leq e^{-\gamma (2 \lfloor \beta E^{-\frac 1 {2\alpha}}\rfloor ^d+1)}
\leq e^{-\gamma' E^{-\frac d {2\alpha}}}.
\end{align}
This gives the desired upper bound taking the double logarithm.

Using the assumption $P_0([0,\varepsilon))\geq C \varepsilon^\kappa$ for some $C,\kappa>0$,
the lower bound follows from analysing $\frac 1 {|\Lambda_L|} \E\big[ \Tr 1_{(-\infty,E]}\big(H_{\alpha,L}^D\big) \big]$ in the same way as was done in \cite[Sec. 6.3]{MR2509110}. We note that one has to choose again the length scale $L\sim E^{\frac 1 {2\alpha}}$.
\end{proof}

\section{Proof of Theorem \ref{Lemma}}\label{s:off_diag_decay}

\begin{proof}[Proof of Theorem \ref{Lemma} \textit{(i)}]
Let $0<\alpha<1$. We use the identity
\beq\label{eq:ExpRep}
(-\Delta)^\alpha =-  \frac 1{|\Gamma(-\frac\alpha 2)|} \int_0^\infty\d t\,t^{-1-\alpha/2}\big( e^{t\Delta} - \id\big),
\eeq
where the integral converges in operator norm, see \cite[Thm 1.1 (c)]{MR3613319}. Hence, for $n,m\in\Z^d$ with $n\neq m$
\beq\label{thm:eq00}
(-\Delta)^\alpha(n,m) =- \frac 1{|\Gamma(-\frac\alpha 2)|} \int_0^\infty\d t\,t^{-1-\alpha/2} e^{t\Delta}(n,m).
\eeq
Let $-\Delta_1$ be the one-dimensional discrete Laplacian, i.e. acting on $\ell^2(\Z)$. Since $-\Delta = (-\Delta_1)\otimes \cdots \otimes \id +\, ...\, + \id \otimes \cdots \otimes (-\Delta_1)$ acting on $\ell^2(\Z^d) = \otimes_{k=1}^d \ell^2(\Z)$ we obtain 
\beq\label{thm:eq11}
e^{t\Delta}(n,m) = \prod_{k=1}^d e^{t\Delta_1} (n_k,m_k)
\eeq
with $n=(n_1,\cdots,n_d)$ and $m=(m_1,\cdots,m_d)$. The matrix elements of $e^{t \Delta_1}$ can be computed explicitly \cite[Eq. (2.2)]{MR3787555} 
\beq
e^{t\Delta_1} (n_1,m_1) = e^{-2 t} I_{n_1-m_1}(2t)
\eeq
where $n_1,m_1\in\Z$ and $I_{(\cdot)}$ stands for the modified Bessel function. The modified Bessel function is symmetric, i.e. $I_k(s)=I_{-k}(s)$ for all $k\in\N$ and $s\geq 0$, see e.g. \cite[Eq. 8.486]{gradshteyn:2007} and therefore
\beq\label{eq.bessel}
e^{t\Delta_1} (n_1,m_1) = e^{-2 t} I_{|n_1-m_1|}(2t).
\eeq
The modified Bessel function $I_k$ has the following integral representation for $k>-1/2$ and $s\geq 0$
\beq\label{bessel}
I_{k}(s) =  \frac{(s/2)^k}{\Gamma(k+1/2)\Gamma(1/2)} \int_{-1}^1\d t\, (1- t^2)^{k-1/2} \cosh(s t) > 0
\eeq
 see e.g. \cite[Eq. 8.431]{gradshteyn:2007} which shows positivity of the modified Bessel function. Equation \eqref{bessel} with $k=|n_1-m_1|$ implies
 that $e^{-2 t} I_{|n_1-m_1|}(2t)>0$ for all $t\geq 0$. Inserting this in \eqref{eq.bessel}, \eqref{thm:eq11} and subsequently in \eqref{thm:eq00} implies the result for $n\neq m \in \Z^d$. 

The positivity of $(-\Delta)^\alpha(n.n)$ follows directly from the integral representation \eqref{eq:ExpRep} using that $0\leq e^{t\Delta}<1$ for all $t>0$ and therefore $e^{t\Delta}(n,n)< 1$ for all $t>0$. 
\end{proof}

\begin{proof}[Proof of Theorem \ref{Lemma} \textit{(ii)}, \textit{(iii)}]
We recall that the discrete Fourier transform diagonalizes the discrete Laplace operator, i.e. $(\mathcal F (-\Delta) \mathcal F^*)(k) = \sum_{j=1}^d \big( 2- 2 \cos(k_j)\big)$, where $k=(k_1,...k_d)\in \mathbb T^d$ and $\mathbb T^d=[-\pi,\pi]^d$ is the $d$-dimensional torus. Here, $\mathcal F:\ell^2(\Z^d)\to L^2(\mathbb T^d)$ is the discrete Fourier transform given by $(\mathcal F u)(k)=\displaystyle \frac 1 {(2\pi)^{d/2}}\sum_{n\in\Z^d} u(n) e^{-i n k}$, where $u\in \ell^2(\Z^d)$ and $k\in \mathbb T^d$. Hence, for $m,n\in\Z^d$
\beq
(-\Delta)^\alpha(n,m) = \frac 1 {(2\pi)^d}\int_{[-\pi,\pi ]^d} \d k \, \Big(\sum_{j=1}^d \big( 2- 2 \cos(k_j)\big)\Big)^\alpha e^{-i(n-m) k}.
\eeq
Let $\psi\in C_c^\infty(\R^d)$ be such that
\begin{itemize}
\item[(i)] $\supp \psi \subset B_1(0)$, where $B_1(0)$ is the ball of radius $1$ about the origin.
\item[(ii)] $0\leq \psi \leq 1$ and $\psi(x)=1$ for all $x\in B_{1/2}(0)$.
\end{itemize}
Then, we define the function
\beq
\Psi_\alpha(k):= \Big(\sum_{j=1}^d \big( 2- 2 \cos(k_j)\big)\Big)^\alpha\big( 1 - \psi(k) \big)
\eeq
and note that $\Psi_\alpha  \in C^{\infty}\big(\mathbb T^d\big)$. 
This implies that the Fourier coefficients of $\Psi_\alpha$ satisfy for $z\in\Z^d$
\beq\label{eq:decayFou}
 \int_{[-\pi,\pi]^d} \d k\,  \Psi_\alpha(k) e^{-i z k} = O\big(|z|^{-\infty}\big) ,
\eeq
as $|z|\to\infty$, i.e.
 the Fourier coefficients of $\Psi_\alpha$ decay faster than any power, see e.g. \cite[Sec. 3]{MR2445437}. Since $(-\Delta)^\alpha(n,m)$ depends on $n-m$ only we set $m=0$ in the following. 
In particular, \eqref{eq:decayFou} implies
\begin{align}
&\lim_{|n|\to\infty} -|n|^{d+2\alpha} (-\Delta)^\alpha(n,0) \notag\\
& =\lim_{|n|\to\infty} -|n|^{d+2\alpha} \frac 1 {(2\pi)^d}
 \int_{[-\pi,\pi]^d} \d k\,  \Big(\sum_{j=1}^d \big( 2- 2 \cos(k_j)\big)\Big)^\alpha \psi(k) e^{-i n k} .\label{lm:pf:eq0}
\end{align}
We note that the last integral can be seen as a Fourier transform of a smooth function on $\R^d\setminus\{0\}$ with a cusp at $0$. We further rewrite
\begin{align}
\int_{[-\pi,\pi]^d} \d k\,  \Big(\sum_{j=1}^d \big( 2- 2 \cos(k_j)\big)\Big)^\alpha \psi(k) e^{-in k}
&=
\int_{\R^d} \d k\, |k|^{2\alpha} \Phi(k) e^{-i n k},
\end{align}
where for $k\in\R^d$
\beq
\Phi(k) := \frac{\Big(\sum_{j=1}^d \big( 2- 2 \cos(k_j)\big)\Big)^\alpha}{|k|^{2\alpha}}\psi(k).
\eeq
We have that $\Phi\in C_c^\infty(\R^d)$ which follows from the fact that $\psi\in C_c^\infty([-\pi,\pi]^d)$,  $\varphi\in C^\infty([-\pi,\pi]^d)$ with $\varphi(k):=\frac{\sum_{j=1}^d ( 2- 2 \cos(k_j))}{|k|^2}>0$ for all $k\in[-\pi,\pi]^d$ and $x\mapsto x^\alpha\in C^\infty((0,\infty))$. Moreover, one easily sees that $\Phi(0)=1$. 

We denote the continuous  fractional negative Laplacian on $\R^d$ by $(-\Delta_c)^\alpha$. Its Fourier representation implies for $w\in\R^d$
\beq
\frac 1 {(2\pi)^{d}}\int_{\R^d} \d k\, |k|^{2\alpha} \Phi(k) e^{-iw k} = \frac 1 {(2\pi)^{d/2}}\big((-\Delta_c)^\alpha \mathcal F_c^* \Phi\big) (-w),
\eeq
where $\mathcal F_c^*$ is the inverse (continuous) Fourier transform. Since $\mathcal F_c^* \Phi\in\mathcal S(\R^d)$, where $\mathcal S(\R^d)$ is the set of  Schwartz functions, we obtain using Lemma \ref{lem:cont_lap} 
\beq
\lim_{|w|\to\infty}-\frac {|w|^{d+2\alpha}} {(2\pi)^{d/2}} \big((-\Delta_c)^\alpha \mathcal F_c^* \Phi\big) (w) = \frac{K_{d,\alpha} }{(2\pi)^{d/2}}\int_{\R^d} \d y\, \big(\mathcal F_c^*\Phi\big)(y),
\eeq
where $K_{d,\alpha}$ is given in Lemma \ref{lem:cont_lap} below. Now
\beq
\frac{1 }{(2\pi)^{d/2}}\int_{\R^d} \d y\, \big(\mathcal F_c^*\Phi\big)(y) = \big(\mathcal F_c \mathcal F_c^*\Phi\big)(0) =\Phi(0)= 1.
\eeq
Altogether we end up with 
\beq
\lim_{|n-m|\to\infty} |n-m|^{d+2\alpha} \big(-(-\Delta)^\alpha(n,m)\big)
 = 
K_{d,\alpha}>0
\eeq
which is \textit{(ii)}.

The strict positivity $-(-\Delta)^\alpha(n,m)>0$ for all $n\neq m\in\Z^d$ proved in Theorem \ref{Lemma} \textit{(i)} together with part \textit{(ii)} implies part \textit{(iii)} directly.
\end{proof}

\appendix

\section{A limit of the continuous fractional Laplacian}

A crucial ingredient to the proof of Theorem \ref{Lemma} is understanding the behaviour of the continuous fractional Laplacian $(-\Delta_c)^\alpha$ applied to Schwartz functions for large $x\in\R^d$. 

\begin{lemma}\label{lem:cont_lap}
Let $\mathcal S(\R^d)$ be the set of Schwartz functions, $\varphi\in \mathcal S(\R^d)$, $0<\alpha<1$ and $(-\Delta_c)^\alpha$ be the continuous fractional negative Laplacian. Then 
\beq
\lim_{|x|\to\infty} |x|^{d+2\alpha} \big((-\Delta_c)^\alpha \varphi\big)(x) = - K_{d,\alpha} \int_{\R^d} \d y\, \varphi(y)
\eeq
where
\beq
K_{d,\alpha}= \frac{4^\alpha \Gamma(d/2+\alpha)}{\pi^{d/2} |\Gamma(-\alpha)|}>0. 
\eeq
\end{lemma}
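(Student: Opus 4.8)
The plan is to use the singular-integral representation of the continuous fractional Laplacian, namely
\beq
\big((-\Delta_c)^\alpha \varphi\big)(x) = K_{d,\alpha}\, \mathrm{p.v.}\!\int_{\R^d} \frac{\varphi(x)-\varphi(y)}{|x-y|^{d+2\alpha}}\, \dy,
\eeq
valid for $\varphi\in\mathcal S(\R^d)$ and $0<\alpha<1$, with exactly the constant $K_{d,\alpha}$ claimed; this is standard, see e.g.\ \cite{MR1054115}. The key observation is that for $|x|$ large, the point $x$ lies far from the essential support of $\varphi$, so the principal value becomes a genuine integral and the $\varphi(x)$ term is negligible. More precisely, I would first show that $\varphi(x)=O(|x|^{-\infty})$ implies $|x|^{d+2\alpha}\varphi(x)\to 0$, and more importantly that the contribution $\varphi(x)\,\mathrm{p.v.}\!\int |x-y|^{-d-2\alpha}\dy$ needs care because that integral itself diverges — so it is cleaner to split the domain of $y$-integration.

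The main steps: (1) Fix $R>0$ and write the integral as $\int_{|y|\le R} + \int_{|y|>R}$. For $|x|>2R$, on the region $|y|\le R$ we have $|x-y|\sim|x|$ uniformly, so $\int_{|y|\le R}\frac{\varphi(y)}{|x-y|^{d+2\alpha}}\dy = |x|^{-d-2\alpha}\big(\int_{|y|\le R}\varphi(y)\dy + o(1)\big)$ after multiplying by $|x|^{d+2\alpha}$; here one uses $\big| |x-y|^{-d-2\alpha} - |x|^{-d-2\alpha}\big| \le C|x|^{-d-2\alpha-1}|y|$ by the mean value theorem. (2) The region $|y|>R$ contributes, after multiplication by $|x|^{d+2\alpha}$, at most $|x|^{d+2\alpha}\cdot C_R \int_{|y|>R}|\varphi(y)|\dy$-type terms that must be handled by the rapid decay of $\varphi$: split further into $R<|y|\le|x|/2$ (where $|x-y|\ge|x|/2$, giving a bound $\le C\int_{R<|y|}|\varphi(y)|\dy$ uniformly in $x$, which $\to 0$ as $R\to\infty$ by dominated convergence) and $|y|>|x|/2$ (where $|\varphi(y)|\le C_N|y|^{-N}\le C_N' |x|^{-N}$, and one checks $|x|^{d+2\alpha}\int_{|y|>|x|/2}|x-y|^{-d-2\alpha}|\varphi(y)|\dy\to 0$ by choosing $N$ large, using that $\int_{\R^d}|x-y|^{-d-2\alpha}\dy$ restricted to $|x-y|>\eps$ is finite). (3) Finally, the term $-\varphi(x)\,\mathrm{p.v.}\!\int|x-y|^{-d-2\alpha}\dy$: since $\varphi(x)$ decays faster than any power while the (regularized) principal value near $x$ contributes nothing and the far part is $O(|x|^{-2\alpha})$, after multiplication by $|x|^{d+2\alpha}$ this is $O(\varphi(x)|x|^{d})=O(|x|^{-\infty})\to0$; one should be slightly careful that the principal value is defined by excising a fixed small ball, whose size can be taken independent of $x$, so $\mathrm{p.v.}\!\int_{|x-y|>\eps}|x-y|^{-d-2\alpha}\dy = c\,\eps^{-2\alpha}$ is a constant.

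Assembling these, the only surviving term is $K_{d,\alpha}\cdot\big(-\lim_{|x|\to\infty}|x|^{d+2\alpha}\int_{|y|\le R}\frac{\varphi(y)}{|x-y|^{d+2\alpha}}\dy\big) = -K_{d,\alpha}\int_{|y|\le R}\varphi(y)\dy$, and letting $R\to\infty$ gives $-K_{d,\alpha}\int_{\R^d}\varphi(y)\dy$, which is the claim. The main obstacle is bookkeeping in step (2): one must choose the splitting radius $|x|/2$ (or any fixed fraction) and an exponent $N$ large enough that the intermediate and far contributions vanish uniformly, and then send $R\to\infty$ in the right order — this is where an inattentive estimate could break the argument, though it is ultimately routine once the three regions are separated. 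The constant itself requires no work: it is inherited verbatim from the standard formula for $(-\Delta_c)^\alpha$.
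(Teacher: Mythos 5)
There is a genuine gap at the diagonal singularity $y=x$. Your far-field analysis (the region $|y|\le R$ giving the main term, and $R<|y|\le |x|/2$ handled by an $R\to\infty$ argument) is fine and matches what the paper does on $B^c_{|x|/2}(x)$. But on the region $|y|>|x|/2$, which \emph{contains} the point $y=x$, you split the numerator $\varphi(x)-\varphi(y)$ into two separate integrals. Each piece is then divergent: $\int_{|y|>|x|/2}|x-y|^{-d-2\alpha}|\varphi(y)|\,\dy=\infty$ whenever $\varphi$ does not vanish near $x$, since $|x-y|^{-d-2\alpha}$ is not locally integrable, and likewise $\mathrm{p.v.}\int|x-y|^{-d-2\alpha}\,\dy$ does not exist (the truncated integrals blow up like $b^{-2\alpha}$ as the excision radius $b\to0$). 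Your attempted repair --- ``the principal value is defined by excising a fixed small ball'' of radius $\eps$ independent of $x$ --- changes the operator: the principal value is the limit $b\to0$, and the contribution from the annulus $b<|x-y|<\eps$ is exactly the part you have discarded. The claim that this near-$x$ part ``contributes nothing'' is false as stated; it is nonzero, and controlling it is the only nontrivial estimate in the lemma. For $2\alpha<1$ it can be bounded by $\sup_{B_\eps(x)}|\nabla\varphi|\,\eps^{1-2\alpha}$ using the Lipschitz bound, but for $2\alpha\ge1$ one must exploit the symmetric (second-order) cancellation; in neither case have you supplied the argument showing this term is $\oh(|x|^{-d-2\alpha})$.

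This is precisely where the paper does its real work: keeping the difference $\varphi(x)-\varphi(y)$ intact on $B_{|x|/2}(x)\setminus B_b(x)$, passing to polar coordinates centred at $x$, introducing the spherical average $\phi(r)=\int_{\partial B_1(0)}\varphi(x+rw)\,\dS(w)$, and using Green's formula to write $\phi'(r)$ in terms of $\int_{B_r(x)}\Delta\varphi$. This yields the bound $C_d\,\sup_{y\in B_{|x|/2}(x)}|\Delta\varphi(y)|\,|x|^{2-2\alpha}$, uniform in $b$, which vanishes as $|x|\to\infty$ because $\Delta\varphi$ is Schwartz and every $y$ in that ball satisfies $|y|\ge|x|/2$. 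An equivalent fix for your write-up would be a second-order Taylor expansion of $\varphi$ about $x$ on $b<|x-y|<\eps$, using that the gradient term integrates to zero by symmetry of the excised annuli, leaving a remainder of size $\sup_{B_\eps(x)}|D^2\varphi|\,\eps^{2-2\alpha}$. Without some such argument, your steps (2)--(3) as written manipulate divergent quantities, so the proof does not go through; the rest of the proposal (the splitting into $|y|\le R$, the mean-value comparison of $|x-y|^{-d-2\alpha}$ with $|x|^{-d-2\alpha}$, and the identification of the constant $K_{d,\alpha}$ from the singular-integral representation) is correct and parallels the paper.
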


\begin{proof}
Let $\varphi\in \mathcal S(\R^d)$,  $0<\alpha<1$ and $x\in \R^d$. 
 Then the continuous fractional negative Laplacian can be written as
\beq\label{eq:int_lap}
\big((-\Delta_c)^\alpha \varphi\big)(x) = K_{d,\alpha} \lim_{b\to 0} \int_{\R^d\setminus B_b(x)} \d y\, \frac{\big(\varphi(x)-\varphi(y)\big)}{|x-y|^{d+2\alpha}}
\eeq
with $K_{d,\alpha}>0$ given above,
see \cite{MR3613319}. 

Let $x\neq 0$ and $0<b<\frac{|x|} 2$. We split the integral in \eqref{eq:int_lap} into two parts as follows:
\begin{align}\label{eq:app0}
&\int_{\R^d\setminus B_b(x)} \d y\, \frac{\big(\varphi(x)-\varphi(y)\big)}{|x-y|^{d+2\alpha}} \notag\\
&=
\int_{B_{\frac{|x|}2}(x)\setminus B_b(x)} \d y\, \frac{\big(\varphi(x)-\varphi(y)\big)}{|x-y|^{d+2\alpha}}
+
\int_{B^c_{\frac{|x|}2}(x)} \d y\, \frac{\big(\varphi(x)-\varphi(y)\big)}{|x-y|^{d+2\alpha}}.
\end{align}
We first consider the second part
\begin{align}
\int_{B^c_{\frac{|x|}2}(x)} \d y\, \frac{\big(\varphi(x)-\varphi(y)\big)}{|x-y|^{d+2\alpha}}
=& \int_{B^c_{\frac{|x|}2}(0)} \d y\, \frac {\varphi(x)} {|y|^{d+2\alpha}} -  \int_{B^c_{\frac{|x|}2}(x)} \d y \frac{\varphi(y)}{|x-y|^{d+2\alpha}}.
\end{align}
Since $\varphi\in \mathcal S(\R^d)$ decays faster than any polynomial the above implies
\begin{align}\label{eq:app1}
\lim_{|x|\to\infty} |x|^{d+2\alpha} \int_{B^c_{\frac{|x|}2}(x)} \d y\, \frac{\big(\varphi(x)-\varphi(y)\big)}{|x-y|^{d+2\alpha}}
=
-\lim_{|x|\to\infty} 
\int_{B^c_{\frac{|x|}2}(x)} \d y \frac{|x|^{d+2\alpha} }{|x-y|^{d+2\alpha}} \varphi(y).
\end{align}
One directly sees that for all $y\in\R^d$
\beq
\lim_{|x|\to\infty} 1_{B^c_{\frac{|x|}2}(x)}(y) \frac{|x|^{d+2\alpha} }{|x-y|^{d+2\alpha}} = 1
\eeq
 where $1_S$ stands here for the indicator function of a set $S\in \text{Borel}(\R^d)$ and for all $x,y\in \R^d$
 \beq
\Big| 1_{B^c_{\frac{|x|}2}(x)}(y) \frac{|x|^{d+2\alpha} }{|x-y|^{d+2\alpha}} \Big| \leq 2^{d+2\alpha}.
 \eeq
Hence the dominated convergence theorem implies 
\beq\label{eq:app2}
\lim_{|x|\to\infty} 
\int_{B^c_{\frac{|x|}2}(x)} \d y \frac{|x|^{d+2\alpha} }{|x-y|^{d+2\alpha}} \varphi(y)
= 
\int_{\R^d} \d y\, \varphi(y). 
\eeq

For the remaining term in \eqref{eq:app0} we compute for $0<b<\frac {|x|} 2$,
\begin{align}
\int_{B_{\frac{|x|}2}(x)\setminus B_b(x)} \d y \frac{\big(\varphi(x)-\varphi(y)\big)}{|x-y|^{d+2\alpha}}
&=\int_b^{\frac{|x|} 2} \d r r^{d-1}\int_{\partial B_1(0)} \d S(w) \frac{\big(\varphi(x)-\varphi(x+rw )\big)}{r^{d+2\alpha}}\notag\\
&
=\int_b^{\frac{|x|} 2} \d r\, \frac{r^{d-1}}  {r^{d+2\alpha}} \big(\phi(0)-\phi(r)\big),
\end{align}
where $\phi(r) := \int_{\partial B_1(0)} \d S(w)\,  \varphi(x+rw )$ and we denote by $\d S$ integration with respect to surface measure. Then
\begin{align}
\phi'(r) &= \int_{\partial B_1(0)} \d S(w)\, \big((\nabla \varphi)(x+rw)\big)\cdot  w\notag\\
&=\frac 1 {r^{d-1}} \int_{\partial B_r(x)} \d S(v)\, \big((\nabla \varphi)(v)\big)\cdot  \frac {v-x} r\notag\\
&=\frac 1 {r^{d-1}} \int_{\partial B_r(x)} \d S(v)\, \frac{\partial \varphi}{\partial \nu}(v),
\end{align}
where $\nu$ is the outer normal vector. Green's formula implies that
\begin{align}
\int_{\partial B_r(x)} \d S(v)\, \frac{\partial \varphi}{\partial \nu}(v) = \int_{B_r(x)} \d y\, (\Delta \varphi) (y).
\end{align}
and therefore we obtain
\begin{align}
\int_b^{\frac{|x|} 2} \d r\, \frac {r^{d-1}} {r^{d+2\alpha}} \big(\phi(0)-\phi(r)\big) = -\int_b^{\frac{|x|} 2} \d r\, \frac 1 {r^{2\alpha+1}}\int_0^r \d s  \frac 1 {s^{d-1}}\int_{B_s(x)} \d y\, (\Delta \varphi) (y).
\end{align}
This implies the bound
\begin{align}
&\Big|\int_{B_{\frac{|x|}2}(x)\setminus B_b(x)} \d y\, \frac{\big(\varphi(x)-\varphi(y)\big)}{|x-y|^{d+2\alpha}}\Big|\notag\\
&\leq
C_{d}^{(3)}
\sup_{y\in B_{\frac{|x|}2}(x)} |(\Delta \varphi)(y)|
\int_b^{\frac{|x|} 2} \d r\, \frac 1 {r^{2\alpha+1}}\int_0^r \d s\, s\notag\\
&=
\frac{C_{d}^{(3)}}{2}
\sup_{y\in B_{\frac{|x|}2}(x)} |(\Delta \varphi)(y)|
\int_b^{\frac{|x|} 2} \d r\, r^{1-2\alpha}
\end{align}
for some constant $C_{d}^{(3)}>0$ depending on the dimension only. Therefore, we end up for fixed $0\neq x\in\R^d$ with
\begin{align}\label{eq:app35}
\limsup_{b\to 0} \Big|\int_{B_{\frac{|x|}2}(x)\setminus B_b(x)} \d y\, \frac{\big(\varphi(x)-\varphi(y)\big)}{|x-y|^{d+2\alpha}}\Big|
\leq C_{d}^{(4)}
\sup_{y\in B_{\frac{|x|}2}(x)} |(\Delta \varphi)(y)| |x|^{2-2\alpha}
\end{align}
for some constant $C_{d}^{(4)}>0$. Since $\varphi\in \mathcal S(\R^d)$, we have 
\beq\label{eq:app4}
\limsup_{|x|\to \infty}\ C_{d}^{(4)}
\sup_{y\in B_{\frac{|x|}2}(x)} |(\Delta \varphi)(y)| |x|^{2-2\alpha} = 0.
\eeq

Equations \eqref{eq:app0}, \eqref{eq:app1}, \eqref{eq:app2}, \eqref{eq:app35} and \eqref{eq:app4} imply
\begin{align}
\lim_{b\to 0}\int_{\R^d\setminus B_b(x)} \d y\, \frac{\big(\varphi(x)-\varphi(y)\big)}{|x-y|^{d+2\alpha}}
=
-\int_{\R^d} \d y\, \varphi(y)
\end{align}
which inserted in \eqref{eq:int_lap} gives the result. 
\end{proof}

\section*{Acknowledgements}

The authors thank for the financial support of the Mathematisches Institut of the Heinrich-Heine-Universit\"at D\"usseldorf, where this work was initiated.

\noindent
The authors would like to thank the anonymous referee for carefully reading our manuscript and for the suggestions that led to the improvement of a previous version of Theorem \ref{Lemma} , in the form of the lower bound in \eqref{eq:decay}.

\newcommand{\etalchar}[1]{$^{#1}$}

\end{document}